\newtheorem{theorem}{Theorem}[section]
\newtheorem*{maintheorem*}{Main Theorem}
\theoremstyle{definition}
\newtheorem{proposition}[theorem]{Proposition}
\subjclass[2010]{Primary: 11B85. Secondary: 11A63, 37B10, 68Q45, 69R15.}
\newtheorem*{proposition*}{Proposition}
\newtheorem*{observation*}{Observation}
\newtheorem*{claim*}{Claim}
\newtheorem*{lemma*}{Lemma}
\newtheorem{corollary}[theorem]{Corollary}
\newtheorem*{remark*}{Remark}
\newtheorem*{conjecture*}{Conjecture}
\newtheorem*{convention*}{Convention}
\theoremstyle{plain}
\newtheorem{lemma}[theorem]{Lemma}
\renewcommand{\tilde}{\widetilde}
\newcommand{\abs}[1]{\left|#1\right|}
\newcommand{\e}{\varepsilon}
\newcommand{\NN}{\mathbb{N}}
\newcommand{\QQ}{\mathbb{Q}}
\newcommand{\RR}{\mathbb{R}}
\newcommand{\N}{\NN}
\newcommand{\cA}{\mathcal{A}}
\newcommand{\cB}{\mathcal{B}}
\newcommand{\cL}{\mathcal{L}}
\newcommand{\parbreak}[1]{
\begin{center}
***
\end{center}
}
\newcommand{\cl}[1]{\operatorname{cl}#1}
\newcommand{\ifbra}[1]{\left\llbracket #1 \right\rrbracket}
\begin{document}

\author[J. Byszewski]{Jakub Byszewski$^{2}$}
\email{jakub.byszewski@gmail.com}

\author[J.\ Konieczny]{Jakub Konieczny$^{1,2}$}
\email{jakub.konieczny@gmail.com}

\keywords{Cobham's theorem, automatic sequences, factorial}
\address[1]{Einstein Institute of Mathematics\\ Edmond J. Safra Campus\\ The Hebrew University of Jerusalem\\ Givat Ram\\ Jerusalem, 9190401\\ Israel}

\address[2]{Department of Mathematics and Computer Science\\
Institute of Mathematics\\
Jagiellonian University\\
ul. prof. Stanis\l{}awa \L{}ojasiewicza 6\\
30-348 Krak\'{o}w\\
Poland}

\title{A density version of Cobham's theorem}

\maketitle 

\begin{abstract}
Cobham's theorem asserts that if a sequence is automatic with respect to two multiplicatively independent bases, then it is ultimately periodic. We prove a stronger density version of the result: if two sequences which are automatic with respect to two multiplicatively independent bases coincide on a set of density one, then they also coincide on a set of density one with a periodic sequence. We apply the result to a problem of Deshouillers and Ruzsa concerning the least nonzero digit of $n!$ in base $12$. 
\end{abstract}

\section{Introduction}

A $k$-automatic sequence is a sequence whose $n$-th term is produced from the digits of $n$ in base $k$ using a finite procedure (more precisely, a deterministic finite automaton with output). The celebrated theorem of Cobham \cite{Cobham-1969} states that a sequence $a$ cannot be simultaneously automatic with respect to two different bases $k$ and $l$, except for the trivial cases when $k$ and $l$ are both powers of the same integer (in which case the notions of $k$- and $l$-automatic sequence coincide) or when $a$ is ultimately periodic (in which case it is automatic with respect to any base). The main aim of this paper is to weaken the hypothesis of Cobham's theorem by assuming that a sequence is $k$- and $l$-automatic almost everywhere.

Let $k$ and $l$ denote integers greater or equal than $2$. We say that $k$ and $l$ are \emph{multiplicatively independent} if they are not both powers of the same integer; equivalently, $\log k/ \log l \in \RR \setminus \QQ$. For a set of integers $A \subset \NN_0$, the \emph{upper density} of $A$  is given by the formula
$$\bar{d}(A) = \limsup_{n \to \infty} \frac{\abs{ A \cap [0,n) }}{n}.$$
We say that two sequences $a$ and $b$ coincide \emph{almost everywhere} (respectively, \emph{ultimately}) if the set of $n$ for which $a(n)\neq b(n)$ has upper density zero (respectively, is finite). A sequence is  \emph{periodic almost everywhere} if it coincides  almost everywhere with some periodic sequence. We define \emph{ultimately periodic} sequences analogously. 

Motivated by results in \cite{DeshouillersRuzsa-2011,Deshouillers-2012}, Deshouillers asked if a version of Cobham's theorem holds for \emph{almost everywhere automatic} sequences; see \cite[Sec.\ 3.5]{Allouche-2015} for details. Stated in a slightly different language, our main result answers this question in the positive.

\begin{maintheorem*} Let $k, l\geq 2$ be multiplicatively independent integers. Let $a$ be a $k$-automatic sequence and let $b$ be an $l$-automatic sequence. If $a$ and $b$ coincide almost everywhere, then they are both periodic  almost everywhere.
\end{maintheorem*}

Cobham's theorem has sparked a lot subsequent research.

A multidimensional version of Cobham's theorem was proved by Semenov \cite{Semenov-1977}, with alternative proofs by Durand in the language of symbolic dynamics \cite{Durand-2008} and Michaux and Villemaire in the language of logic \cite{MichauxVillemaire-1996}. Analogous results for other numeration systems are obtained also in \cite{ Fabre-1994, PointBruyere-1997, Durand-1998b, Hansel-1998,  Bes-2000, DurandRigo-2009}. For related results, see also \cite{Bes-1997}. Culminating a series of papers including \cite{ Fabre-1994, Durand-1998, Durand-2002-AA}, Durand \cite{Durand-2011} proved an analogue of Cobham's theorem for morphic sequences. This problem is closely related to the variant for different numeration systems.

An analogue for fractals was obtained by Adamczewski and Bell \cite{AdamczewskiBell-2011}, with a later generalisation by Chan and Hare \cite{ChanHare-2014}. This point of view has been further extended by Charlier, Leroy, and Rigo \cite{CharlierLeroyRigo-2015}. An analogue for quasi-automatic sequences (with applications to function fields in positive characteristic) was obtained by Adamczewski and Bell \cite{AdamczewskiBell-2008}. An analogue for regular sequences was obtained by Bell \cite{Bell-2005}. This work was further extended by Adamczewski and Bell to the case of Mahler series \cite{AdamczewskiBell-2013}, and another proof  was recently given by Sch\"{a}fke and Singer \cite{SchafkeSinger-2017}. An analogue for real numbers was obtained by Boigelot and Brusten \cite{BoigelotBrusten-2009}, see also \cite{ BoigelotBrustenLeroux-2009, BoigelotBrustenBruyere-2010}. Related results for Gaussian integers were obtained by Hansel and Safer \cite{HanselSafer-2003} and Bosma, Fokkink, and Krebs \cite{BosmaFokkinkKrebs-2017}.

Considerable effort has also gone into simplifying the proof of Cobham's theorem. Michaux and Villemaire \cite{MichauxVillemaire-1993} give a proof of Cobham's theorem using the language of logic. The original proof of Cobham has been simplified by Hansel \cite{Hansel-1982} (see also \cite{Reutenauer-1984}). A presentation of the proof is given in \cite{Perrin-1990} and \cite{AlloucheShallit-book}, and should be read together with \cite{Rigo-2006} which fills a gap in the proof noticed by K\"arki \cite{Karki-2005} and Rigo and Waxweiler. A recent presentation of the proof was also given by Shallit \cite{Shallit-2017}.
For an extended discussion of various variants of Cobham's theorem, we refer to the survey papers \cite{BruyereHanselMichauxVillemaire-1994, Durand-2002, DurandRigo-2011}.

We briefly describe the contents of the paper. In section 2, we introduce the basic concepts and state a few preliminary lemmas concerning automata. In section 3, we give a proof of the main result using combinatorial and automata-theoretic methods. Note that our proof relies on the classical version of Cobham's theorem. In the following section, we apply the main result to a problem of Deshouillers--Ruzsa concerning the least nonzero digit of the expansion of $n!$ in base $12$. Finally, in a short appendix we give an alternative proof of the main result under the extra assumption that both sequences are uniformly recurrent. In this case the result follows immediately from a theorem of Fagnot, but we also show how it can be derived from an arguably easier result of Allouche--Rampersad--Shallit which states that the lexicographically minimal element in the dynamical system generated by an automatic sequence is automatic.

\subsection*{Notation} We mainly use standard notation, with the possible exception of the Iverson bracket. If $\varphi$ is a sentence, then $\ifbra{\varphi} = 1$ if $\varphi$ is true, and $\ifbra{\varphi} = 0$ if $\varphi$ is false.

\subsection*{Acknowledgements}
The authors thank Jean-Paul Allouche, Jean-Marc Deshouillers, and Jeffrey Shallit for their useful suggestions. JK is also grateful to the University of Oxford, where part of this work was completed. JK  is supported by the European Research Council (ERC) under grant ``Ergodic Theory and Additive Combinatorics''.

\section{Preliminaries}

Let $\Omega$ be a finite set. We denote the set of all finite words over $\Omega$ by $\Omega^*$ and the set of all (right)-infinite words over $\Omega$ by $\Omega^{\omega}$. We identify $\Omega$-valued sequences  with elements of $\Omega^{\omega}$. A \emph{factor} of an infinite word $a\in\Omega^{\omega}$ is a finite word consisting of a number of consecutive symbols of $a$. For $a=a_0a_1a_2\cdots \in \Omega^{\omega}$ and integers $0\leq i\leq j$, we write $a_{[i,j)}$ for the factor $a_{[i,j)}=a_i a_{i+1} \cdots a_{j-1}$. The \emph{length} of a factor $u$ is the number of symbols it contains and is denoted by $|u|$. The set of all factors of $a$ is called the \emph{language} of $a$ and is denoted by $\cL(a)$. A prefix of a word $a$ is a factor of the form $a_{[0,j)}$ with $j\geq 0$. 

We endow the space $\Omega^{\omega}$ with the product topology, using discrete topology on the space $\Omega$. Let $\sigma\colon \Omega^{\omega} \to \Omega^{\omega}$ be the shift map given by $\sigma(a_0a_1a_2\cdots)=a_1a_2a_3\cdots$. For a word $a\in \Omega^{\omega}$, the \emph{subshift} $X_a$ induced by $a$ is the closure $X_a=\cl(\{ \sigma^n a \mid n \geq 0\})$ of the orbit of $a$. It is easy to see that $X_a$ consists exactly of the words $b\in \Omega^{\omega}$  such that each factor of $b$ is a factor of $a$. A word $a\in \Omega^{\omega}$ is called \emph{uniformly recurrent} if each factor $w$ of $a$ occurs in $a$ with bounded gaps, i.e., if there exists a constant $C$ (depending on $w$) such that every factor of $a$ of length at least $C$ contains $w$ as its factor. 

Let $k\geq 2$ be an integer. We work with $k$-automata (more precisely, deterministic finite $k$-automata with output) $\cA = (S,\Sigma_k,\delta,s_0,\Omega,\tau)$, where $S$ is the set of states, $\Sigma_k=\{0,1,\ldots,k-1\}$ is the input alphabet, $\delta$ is the transition function, $s_0$ is the initial state, $\Omega$ is the output alphabet, and $\tau$ is the output map. All the automata will read the input starting with the leading digit. For $n\in \N_0$, we denote by $(n)_k \in \Sigma_k^*$ the representation of $n$ in base $k$ without leading $0$'s.

 Let $\cA = (S, \Sigma_k,\delta, s_0,\Omega, \tau)$ be a $k$-automaton. We say that a state $s$ is \emph{accessible} if there exists a word $v\in \Sigma_k^*$ such that $\delta(s_0,v)=s$. Replacing $S$ with the set of accessible states if necessary, we can --- and will --- always assume that all states are accessible in all automata we consider.
 
 We say that $\cA$ \emph{ignores the leading $0$'s} if $\delta(s_0,0) = s_0$. We  say that $\cA$ is \emph{idempotent} if the action $\delta(\cdot,0)$ of $0$ on the set of states is idempotent, meaning that for each state $s \in S$ we have $\delta(s,00) = \delta(s,0)$. 

A \emph{strongly connected} component of a $k$-automaton $\cA = (S,\Sigma_k,\delta,  s_0,\Omega, \tau)$ is a subset of states $C \subset S$ such that $\delta(s,u) \in C$ for $s\in C$ and $u\in \Sigma_k$ and such that $C$ is strongly connected, i.e., for each $s,s' \in C$ there exists a word $w\in \Sigma_k^*$ such that $\delta(s,w)=s'$. For a state $s$, 
we denote by $a_{\cA,s}$ the $k$-automatic sequence induced by the automaton $(S,\Sigma_k,\delta,s,\Omega,\tau)$.

We begin with a few easy lemmas.

\begin{lemma}\label{lemma:densityconncomponent}
	Let $\cA = (S,\Sigma_k,\delta,  s_0,\Omega, \tau)$ be a  $k$-automaton. Then the set of $n\in \N_0$ such that $\delta(s_0,(n)_k)$ does not lie in a strongly connected component of $\cA$ has upper density $0$.
\end{lemma}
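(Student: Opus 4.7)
The plan is to partition the state set $S$ into trap (sink) states and transient states and then show that almost no integer path ends in the transient part. Let $B$ denote the union of all strongly connected components of $\cA$ in the sense defined above (subsets closed under all transitions and internally strongly connected) and let $T = S \setminus B$. The crucial property of $B$ is that it is forward-invariant: $\delta(s,u) \in B$ whenever $s \in B$ and $u \in \Sigma_k$. Consequently, if $\delta(s_0,(n)_k) \in T$, then \emph{every} intermediate state along the computation on $(n)_k$ must already lie in $T$, because once the path enters $B$ it can never return.

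I would next show that transient states admit a bounded-length escape word. Consider the condensation of the transition digraph of $\cA$ (with edges labelled by $\Sigma_k$) into its strong components; this is a DAG, and its sinks are precisely the SCCs making up $B$. For each $s \in T$, following a simple path in the condensation from the SCC of $s$ down to some sink gives a word $w_s \in \Sigma_k^*$ of length at most $|S|$ with $\delta(s,w_s) \in B$. Setting $L := |S|$ and padding each $w_s$ to length $L$ (which keeps us in $B$ by forward-invariance), we conclude that for every $s \in T$ at least one of the $k^L$ words in $\Sigma_k^L$ drives $s$ into $B$, so at most $k^L - 1$ of them keep $s$ in $T$.

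Define $T_m := \#\{w \in \Sigma_k^m : \delta(s_0,w) \in T\}$. Splitting a length-$(m+L)$ word $w = uv$ and noting that $\delta(s_0,u)$ must already lie in $T$ and then $v$ must keep us in $T$, the preceding step yields $T_{m+L} \leq (k^L - 1) T_m$. Iterating, $T_m \leq C \lambda^m$ with $\lambda := (k^L - 1)^{1/L} < k$. The number of $n < N$ with $\delta(s_0,(n)_k) \in T$ is bounded by $\sum_{m=0}^{\lceil \log_k N \rceil} T_m = O(\lambda^{\log_k N}) = o(N)$, so the upper density is $0$. There is no real obstacle here; the only bookkeeping to take care of is the leading-zero convention for $(n)_k$ (which only affects the count by a factor absorbed into $C$) and verifying that the sink-of-condensation argument actually produces an escape word, both of which are routine.
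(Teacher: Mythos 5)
Your proof is correct and follows essentially the same route as the paper: the paper constructs a single word $w$ that drives every state of $\cA$ into a (forward-closed) strongly connected component, observes that a bad $n$ therefore has a base-$k$ expansion avoiding $w$, and cites the standard fact that such integers have density zero, while you reach the same conclusion with per-state escape words and the explicit block-counting bound $T_{m+L}\le (k^L-1)\,T_m$. The only difference is that your argument is self-contained where the paper invokes the factor-avoidance fact; both hinge on the forward-invariance of the union of strongly connected components, which you verify correctly.
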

\begin{proof} It is easy to see that there exists a word $w\in \Sigma_k^*$ such that $\delta(s,w)$ lies in a strongly connected component of $\cA$ for all $s\in S$. Thus, in order that $n$ not belong to a strongly connected component of $\cA$, it is necessary that $(n)_k$ not contain $w$ as a factor. The set of such integers has upper density $0$.
\end{proof}

\begin{lemma}\label{lemma:idempotentautomaton}
	Let $a$ be a $k$-automatic sequence. Then there exists  a power $k'$ of $k$ such that $a$ is produced by an idempotent $k'$-automaton which  ignores the leading $0$'s. 
\end{lemma}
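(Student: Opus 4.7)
The plan is to start with a $k$-automaton $\cA = (S, \Sigma_k, \delta, s_0, \Omega, \tau)$ producing $a$, and to build a $k'$-automaton with the two desired properties by taking $k' = k^r$ for a suitable power $r$ and adding a fresh initial state carrying a self-loop on~$0$. Let $f \colon S \to S$ be the action $f(s) = \delta(s, 0)$ of the digit $0$. Since the set of self-maps of the finite set $S$ is itself finite, the sequence $(f^n)_{n\geq 1}$ is eventually periodic under composition, say $f^m = f^{m+p}$ for some $m, p$. Choosing $r$ to be any multiple of $p$ that is at least $m$ gives $f^{2r} = f^r$; equivalently, $f^r$ is idempotent on $S$.

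Next I would define $\cA' = (S \sqcup \{s_0'\}, \Sigma_{k^r}, \delta', s_0', \Omega, \tau')$ as follows. For $d \in \Sigma_{k^r}$, let $w_d \in \Sigma_k^r$ denote the base-$k$ representation of $d$ padded on the left by zeros to length exactly $r$, and let $u_d \in \Sigma_k^*$ denote the base-$k$ representation of $d$ without leading zeros. For $s \in S$ and $d \in \Sigma_{k^r}$, I put $\delta'(s, d) = \delta(s, w_d)$ and $\tau'(s) = \tau(s)$. At the new initial state I set $\delta'(s_0', 0) = s_0'$ (which forces the leading-zero condition), $\delta'(s_0', d) = \delta(s_0, u_d)$ for $d \neq 0$, and $\tau'(s_0') = \tau(s_0)$.

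What remains is to verify the three required properties. Ignoring leading zeros is built into the self-loop at $s_0'$. Idempotence at $s_0'$ is immediate, and on an old state $s \in S$ it reduces to $\delta'(s, 0\cdot 0) = f^{2r}(s) = f^r(s) = \delta'(s, 0)$, which is precisely the defining property of $r$. The substantive check is that $\cA'$ still computes $a$: for $n = 0$ the empty computation at $s_0'$ outputs $\tau(s_0) = a(0)$, and for $n \geq 1$, writing $(n)_{k^r} = d_1 d_2 \cdots d_l$ with $d_1 \neq 0$, the concatenation $u_{d_1} w_{d_2} \cdots w_{d_l}$ is exactly $(n)_k$ in base $k$, so the tailored $u_{d_1}$ transition at $s_0'$ realigns the block boundary and the subsequent steps inside $S$ mimic the run of $\cA$ on $(n)_k$.

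The main obstacle is precisely this block-boundary realignment. A naive approach---taking the same state set $S$ in the grouped $k^r$-automaton and keeping $s_0$ as initial state---fails in two ways at once: $s_0$ need not be a fixed point of $f^r$, so leading zeros are not ignored, and for $n$ whose leading base-$k^r$ digit has a short base-$k$ expansion, the automaton would read phantom leading zeros and apply extra powers of $f$, producing the wrong symbol. Introducing the fresh initial state with a self-loop, together with the special non-padded transition $\delta'(s_0', d) = \delta(s_0, u_d)$ on the first digit, resolves both issues simultaneously.
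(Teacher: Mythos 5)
Your proof is correct and follows essentially the same route as the paper: pass to base $k^r$ by grouping digits into blocks, and choose the block length so that the $r$-fold iterate of the zero-action $\delta(\cdot,0)$ is idempotent. The only difference is cosmetic --- the paper first normalizes $\cA$ to ignore leading $0$'s (so that padding the leading block is harmless and no new state is needed), whereas you fold that normalization into the grouped construction via the fresh initial state with the unpadded first-digit transitions.
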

\begin{proof}
	It is  easy to see that any $k$-automatic sequence is produced by an automaton ignoring the leading $0$'s. Let $\cA =  (S,\Sigma_k,\delta,  s_0,\Omega,\tau)$ be such a $k$-automaton. For $t \in \NN$, write each $u\in \Sigma_{k^t}$ in the form $u=\sum_{i=0}^{t-1} u_i k^{i}$ with $u_i\in \Sigma_k$. Since $\cA$ ignores the leading $0$'s, the sequence $a$ is produced by a $k^t$-automaton $\cA_t = (S,\Sigma_{k^t},  \delta^{(t)},s_0, \Omega, \tau)$, where the transition function $\delta^{(t)}$ is given by $\delta^{(t)}(s,u) = \delta(s, u_{t-1}\cdots u_0)$. In particular, $\delta^{(t)}(\cdot,0)$ is the $t$-fold composition of $\delta(\cdot,0)$. Therefore for $k' = k^t$ with  $t \geq 1$ divisible by all the integers $\leq \abs{S}$, the automaton $\cA_t$ is idempotent and ignores the leading $0$'s.
\end{proof}

\begin{lemma}\label{lem:thick->component}
	Let $a \colon \NN_0 \to \Omega$ be a $k$-automatic sequence produced by an idempotent automaton $\cA = (S,s_0,\Sigma_k,\delta,\Omega,\tau)$ which ignores the leading $0$'s. Let $X_a$ be the subshift generated by $a$ and let $x \in \Omega$. We identify $x$ with the constant word $x^{\omega} \in \Omega^{\omega}$. Then the following conditions are equivalent: \begin{enumerate} 
	\item The point $x$ belongs to $X_a$. 
	\item There exists a strongly connected component $C$ of $\cA$ such that $\tau(s) = x$ for all $s \in C$.
	\end{enumerate}
\end{lemma}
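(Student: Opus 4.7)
The direction $(ii) \Rightarrow (i)$ is the easier one. Given a strongly connected component $C$ with $\tau \equiv x$ on $C$, I pick $n \geq 1$ with $\delta(s_0, (n)_k) \in C$: such an $n$ exists because $C$ is accessible and the automaton ignores leading zeros, so any shortest word $v$ with $\delta(s_0, v) \in C$ starts with a nonzero digit (and if $s_0 \in C$ already, any nonzero letter does the job). For every $M \geq 0$ and every $0 \leq i < k^M$ the base-$k$ expansion of $nk^M + i$ is $(n)_k$ followed by the $M$-digit representation of $i$; closure of $C$ under $\delta$ forces $\delta(s_0, (nk^M+i)_k) \in C$ and hence $a(nk^M+i) = x$. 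This gives an $x$-run of length $k^M$ inside $a$ for every $M$, so $x^L \in \cL(a)$ for every $L$ and therefore $x^\omega \in X_a$.

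For $(i) \Rightarrow (ii)$ the idea is to turn long $x$-runs in $a$ into a statement about a whole sub-automaton. The assumption $x^\omega \in X_a$ means that for every $L$ we can find arbitrarily large $n$ with $a(n), a(n+1), \ldots, a(n+L-1)$ all equal to $x$. Taking $L$ slightly bigger than $2k^M$ I can locate a whole block $[jk^M, (j+1)k^M)$, with $j \geq 1$, inside such a run, which yields $\tau(\delta(s_j, w)) = x$ for every $w \in \Sigma_k^M$, where $s_j := \delta(s_0, (j)_k)$. The pigeonhole principle applied to the finite state set $S$ then produces one fixed state $s^*$ and an infinite set $\mathcal{M} \subseteq \N$ such that $s^* = s_j$ works for every $M \in \mathcal{M}$.

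The heart of the argument is to convert this into the stronger statement that the sub-automaton rooted at $\tilde{s}_0 := \delta(s^*, 0)$ produces the constant sequence $x^\omega$. Idempotence gives $\delta(\tilde{s}_0, 0) = \tilde{s}_0$. For any fixed $i \geq 0$, picking $M \in \mathcal{M}$ with $k^M > i$ and padding the representation of $i$ with leading zeros to length $M$ gives a word in $\Sigma_k^M$; idempotence then collapses the padding and yields $\delta(s^*, [i]_M) = \delta(\tilde{s}_0, (i)_k)$, whose $\tau$-image must therefore be $x$. Consequently every state accessible from $\tilde{s}_0$ has output $x$. Choosing any strongly connected component $C$ reachable from $\tilde{s}_0$ (which exists by finiteness of $S$ and is strongly connected in $\cA$ because $\tilde{s}_0$ is accessible from $s_0$) gives the desired $C$.

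I expect the genuine obstacle to be precisely this last reduction. A more direct attempt --- examining the states visited along a long $x$-run and trying to locate a strongly connected component among them --- runs into the problem that the lengths of paths into a given state of $C$ are constrained to a single residue class modulo the period of $C$, and one cannot guarantee that the infinite set $\mathcal{M}$ produced by pigeonhole meets that residue class. The idempotence hypothesis (supplied by Lemma~\ref{lemma:idempotentautomaton}) is exactly what circumvents this obstacle: it allows one to absorb arbitrarily many leading zeros and reduce the problem to constancy of a sub-automaton.
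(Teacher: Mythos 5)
Your argument is correct, and the first direction (ii)\,$\Rightarrow$\,(i) matches the paper's (you are in fact a bit more careful about leading zeros when choosing $n$). In the direction (i)\,$\Rightarrow$\,(ii) the core mechanism is the same as in the paper --- a run of $x$'s of length greater than $2k^M$ contains a full block $[jk^M,(j+1)k^M)$ with $j\geq 1$, so every word of length $M$ read from the state $s_j=\delta(s_0,(j)_k)$ outputs $x$, and idempotence collapses zero padding --- but your finishing step is genuinely different. The paper arranges in advance that the root state of the block lies in a strongly connected component, by inserting a word $w$ with the property that $\delta(s,w)$ lies in a strongly connected component for every $s$, and then reaches each state of that component by connecting words of uniformly bounded length, padding with zeros so that the padded words still index into the $x$-block. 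You instead pigeonhole over infinitely many scales $M$ to fix a single root state $s^*$, deduce that \emph{every} state reachable from $\tilde s_0=\delta(s^*,0)$ has output $x$, and only then extract a transition-closed (sink) strongly connected component inside the reachable set. Your route avoids the synchronizing word and the uniform bound on connecting words, at the price of the pigeonhole over scales and of the standard fact (also asserted without proof in the paper) that from any state one can reach a strongly connected component in the paper's sense. Both arguments are sound.

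One small repair: the condition $k^M>i$ does not guarantee a nonempty padding --- if $i$ has exactly $M$ base-$k$ digits, then $[i]_M=(i)_k$ and the identity $\delta(s^*,[i]_M)=\delta(\tilde s_0,(i)_k)$ is not justified. Since $\mathcal{M}$ is infinite, simply choose $M\in\mathcal{M}$ with $M\geq \abs{(i)_k}+1$; equivalently, argue with arbitrary words: for $w\in\Sigma_k^*$ and $M\in\mathcal{M}$ with $M\geq\abs{w}+1$ one has $\delta(\tilde s_0,w)=\delta(s^*,0^{M-\abs{w}}w)$ and $0^{M-\abs{w}}w\in\Sigma_k^M$, so $\tau(\delta(\tilde s_0,w))=x$. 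With this adjustment the proof is complete.
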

\begin{proof}

Let us first assume that $x\in X_a$. It is easy to see that there exists a word $w\in \Sigma_k^*$ such that $\delta(s,w)$ lies in a  strongly connected component of $\cA$ for all states $s\in S$. We may further assume that $w$ has no leading $0$'s. For two states $s,s'$ in the same strongly connected component of $\cA$ there exists a word $v_{s,s'}\in \Sigma_k^*$ such that $\delta(s,v_{s,s'})=s'$. Let $t \geq 0$ be such that all the words $v_{s,s'}$ have length $\leq t$. 

Since $x$ belongs to $X_a$, there are factors of $a$ consisting of arbitrarily long sequences of the symbol $x$. Thus, there exists a word $u \in \Sigma_k^*$ with no leading $0$ such that $\delta(s_0,uw0v)=x$ for all $v\in \Sigma_k^t$.
By the construction of $w$, the state $s = \delta(s_0, uw0)$ lies in some  strongly connected component $C$ of $\cA$. For any $s' \in C$, we have a word $v_{s,s'} \in \Sigma_k^i$ with $i\leq t$ and $\delta(s,v_{s,s'}) = s'$. It follows (using the fact that $\cA$ is idempotent) that
	\begin{equation*}
		\tau(s') = \tau(\delta(s,v_{s,s'}))= \tau( \delta(s_0, uw0^{t-i+1}v_{s,s'})) = x.
	\end{equation*}

Now assume that there exists a  strongly connected component $C$ of $\cA$ such that $\tau(s)=x$ for all $s\in C$. Since $\cA$ ignores the leading $0$'s and since every state is accessible, there exists a word $w\in \Sigma_k^*$ such that $\delta(s_0,w)\in C$. Let $m$ be an integer such that $(m)_k=w$. By the assumption on C, we have $a(mk^t+i)=x$ for $t\geq 0$ and $0\leq i<k^t$, and hence $x^{\omega}$ lies in $X_a$. 
\end{proof}

\begin{remark*}
	The assumption of idempotence is essential in the above lemma. Indeed, for the sequence $a(n) = \abs{(n)_k} \bmod{2}$ that describes the parity of the number of digits of $n$ in base $k$, the subshift $X_a$ contains both the constant word $1^{\omega}$ and the constant word $0^{\omega}$, but $a$ is produced by a $k$-automaton $\cA$ which ignores the leading $0$'s and has precisely one strongly connected component.
\end{remark*} 
\section{Proof of the Main Theorem}

This section is devoted to the proof of the main result. The first step towards this goal is the following lemma.

\begin{lemma}\label{lem:positive density}
Let $k,l\geq 2$ be multiplicatively independent integers. Let  $a$ be a $k$-automatic sequence produced by a $k$-automaton $\cA$ and let $b$ be an $l$-automatic sequence  produced by an $l$-automaton $\cB$. Assume that $\cA$ and $\cB$ are idempotent and ignore the leading $0$'s. Let $C$ be a  strongly connected component of $\cA$ and let $D$ be a  strongly connected component of $\cB$. 

Then, there exist states $s\in C$ and $r\in D$ such that the automatic sequences $\tilde{a}=a_{\cA,s}$ and $\tilde{b}=a_{\cB,r}$ satisfy the following property: For any integer $m$ the set $$Z_m=\{ n \in \N_0 \mid a_{[n,n+m)} = \tilde{a}_{[0,m)} \text{ and } b_{[n,n+m)} = \tilde{b}_{[0,m)}\}$$ has positive upper density.

Assume further that the sequences $a$ and $b$ agree almost everywhere. Then the sequences $\tilde{a}$ and $\tilde{b}$ constructed above are equal.
\end{lemma}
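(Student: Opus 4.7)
The plan is to exhibit states $s \in C$ and $r \in D$ via a finite pigeonhole argument rooted in idempotence, and then to deduce $\tilde{a} = \tilde{b}$ from an elementary density calculation.

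\textbf{Block transfer.} Fix $m$ and set $I_m = 1 + \lceil \log_k m \rceil$, $J_m = 1 + \lceil \log_l m \rceil$. Define the projection $\pi_\cA(t) := \delta_\cA(t, 0)$, which by idempotence of $\cA$ is a retraction onto the fixed points of $\delta_\cA(\cdot, 0)$. I would verify the following block-transfer principle: if $k^{I_m} \mid n$ and $s^{(n)} := \delta_\cA(s_0, (n)_k)$, then $s^{(n)} = \pi_\cA(\delta_\cA(s_0, (n/k^{I_m})_k))$ is a fixed point of $\pi_\cA$, and for each $0 \leq j < m$ the factorisation $(n+j)_k = (n/k^{I_m})_k \cdot 0^{I_m - |(j)_k|} \cdot (j)_k$ combined with idempotence applied to the interior zero block gives $\delta_\cA(s_0, (n+j)_k) = \delta_\cA(s^{(n)}, (j)_k)$, whence $a_{[n,n+m)} = a_{\cA, s^{(n)}}[0, m)$. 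The analogous statement in base $l$ produces a fixed point $r^{(n)}$ of $\pi_\cB$ with $b_{[n, n+m)} = a_{\cB, r^{(n)}}[0, m)$ whenever $l^{J_m} \mid n$. Since $C$ and $D$ are closed under their respective transitions, the conditions $\delta_\cA(s_0, (n)_k) \in C$ and $\delta_\cB(s_0, (n)_l) \in D$ force $s^{(n)} \in C$ and $r^{(n)} \in D$.

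\textbf{Positive joint density and pigeonhole.} Consider
\[
S_m := \bigl\{ n \in \NN : \operatorname{lcm}(k^{I_m}, l^{J_m}) \mid n,\ \delta_\cA(s_0, (n)_k) \in C,\ \delta_\cB(s_0, (n)_l) \in D \bigr\}.
\]
Showing $\bar d(S_m) > 0$ is the main obstacle of the proof. The three conditions defining $S_m$ each separately cut out a positive-density set: the divisibility is an arithmetic progression, and each state-membership condition has positive density via reachability of the strongly connected component by a prefix of positive density, combined with Lemma~\ref{lemma:densityconncomponent}. The challenge is to show that these base-$k$ and base-$l$ conditions are \emph{jointly} of positive density; this is precisely where multiplicative independence of $k$ and $l$ is essential, and I would establish it through a Weyl-style equidistribution argument exploiting the irrationality of $\log k/\log l$ to decouple the base-$k$ and base-$l$ digit-prefix conditions asymptotically. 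Granted $\bar d(S_m) > 0$, partition $S_m$ by the pair $(s^{(n)}, r^{(n)}) \in C \times D$; subadditivity of upper density on finite partitions yields some $(s^{(m)}, r^{(m)}) \in C \times D$ with $\bar d(Z_m^{(s^{(m)}, r^{(m)})}) > 0$. Setting $G_m := \{(s, r) \in C \times D : \bar d(Z_m^{(s, r)}) > 0\}$, the inclusion $Z_{m+1}^{(s, r)} \subseteq Z_m^{(s, r)}$ gives $G_{m+1} \subseteq G_m$; this decreasing chain of nonempty subsets of the finite set $C \times D$ stabilises, and any $(s, r) \in \bigcap_m G_m$ supplies the required $\tilde a = a_{\cA, s}$ and $\tilde b = a_{\cB, r}$.

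\textbf{Almost-everywhere agreement.} If additionally $\bar d(\{n : a_n \neq b_n\}) = 0$, then the shifted union $W_m := \bigcup_{j=0}^{m-1}\{n : a_{n+j} \neq b_{n+j}\}$ has upper density zero as a finite union of shifts of a density-zero set. Combined with $\bar d(Z_m) > 0$, the set $Z_m \setminus W_m$ is nonempty; any $n$ in it satisfies $\tilde a_{[0, m)} = a_{[n, n+m)} = b_{[n, n+m)} = \tilde b_{[0, m)}$. As $m$ is arbitrary, $\tilde a = \tilde b$.
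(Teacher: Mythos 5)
Your overall architecture is the same as the paper's (reduce to the states reached after a long block of zeros using idempotence, pigeonhole over $C\times D$ with a nested family of positive-density sets, and derive $\tilde a=\tilde b$ from a.e.\ agreement by intersecting $Z_m$ with the complement of a density-zero set), and those parts are carried out correctly. But the step you yourself flag as ``the main obstacle'' --- that $\bar d(S_m)>0$ --- is exactly the heart of the lemma and the only place where multiplicative independence enters, and you do not prove it: you only assert that a ``Weyl-style equidistribution argument'' would decouple the two digit-prefix conditions. As it stands this is a genuine gap. Note also that your remark that each of the three defining conditions of $S_m$ separately has positive density proves nothing, since positive-density sets can have empty intersection; the whole content of the lemma is the joint statement.

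To close the gap you must actually produce, for each scale, many integers that simultaneously have a prescribed leading block in base $k$, a prescribed leading block in base $l$, and enough trailing zeros in both bases. The paper does this as follows: fix words $u,v$ (without leading zeros) reaching $C$ and $D$; since $k,l$ are multiplicatively independent, $\{k^i/l^j \mid i,j\in\N_0\}$ is dense in the positive reals (equivalently, $i\log k - j\log l$ approximates any real number), so one can choose $i,j\geq t+1$ with $(l^j)_k = u0x'$ and $(k^i)_l = v0y'$, and then $N=k^il^j$ begins with $u0$ in base $k$, begins with $v0$ in base $l$, and ends in at least $t+1$ zeros in both bases. The crucial extra step --- which turns a single witness into positive \emph{density} --- is a perturbation argument: there is an $\e>0$ depending only on $\abs{u},\abs{v},t$ (one may take $\e = 1/k^{\abs{u}+t+2}l^{\abs{v}+t+2}$) such that every $N+mk^{t+1}l^{t+1}$ with $0\leq m<\e N$ still has the same leading blocks in both bases and still ends in $t+1$ zeros in both bases; counting these gives $\bar d \geq \e/(1+\e k^{t+1}l^{t+1})>0$ for the joint set. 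Some such two-part argument (simultaneous prefix control via density of $k^i/l^j$, plus a perturbation to upgrade existence to positive upper density) is needed in your write-up; without it the lemma is not established.
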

\begin{proof}
	Write $\cA=(S,\Sigma_k,\delta,  s_0,\Omega,\tau)$ and $\cB=(S',\Sigma_k,\delta',  s'_0,\Omega',\tau')$. 	Pick some $u \in \Sigma_k^*$, $v \in \Sigma_l^*$ such that $\delta(s_0, u) \in C$ and $\delta'(s'_0,v) \in D$; we may additionally assume that $u$ and $v$ begin with nonzero digits. For any $t \geq 0$, we consider the sets 
	$$
		A^{(t)} = \{ n \in \NN_0 \mid (n)_k = u x 0^{t+1},\ (n)_l = v y 0^{t+1} \text{ for some } x \in \Sigma_k^*,\ y \in \Sigma_l^*\}. 
	$$
The interest in these sets stems from the observation that for  $m < \min(k^t,l^t)$ and $n \in A^{(t)}$ we have the equalities
$$
	a(n + m) = \tau(\delta(\delta(s_0, (n)_k), (m)_k)),\qquad 
	b(n + m) = \tau'(\delta'(\delta'(s'_0, (n)_l), (m)_l)),
$$
and moreover $\delta(s_0, (n)_k) \in C$ and $\delta'(s'_0, (n)_l ) \in D$. (Here, we use the fact that the automata are idempotent.) We may further split $A^{(t)}$ into a union of disjoint pieces
$$	
	A^{(t)}_{s,r} = \{ n \in A^{(t)} \mid \delta( s_0, (n)_k ) = s,\ \delta'( s'_0, (n)_l ) = r\}
$$
for $s \in C,\ r \in D$. 

We claim that the sets $A^{(t)}$ have positive upper density. First, note that there exist arbitrarily large integers $N$ such that $$(N)_k = u0x0^{t+1} \quad \text{and} \quad (N)_l = v0y0^{t+1}$$ for some $x \in \Sigma_k^*$ and $y \in \Sigma_l^*$. 
Indeed, since $k$ and $l$ are multiplicatively independent, the set $\{k^i/l^j \mid i,j\in \N_0\}$ is dense in the positive reals, and hence there exist arbitrarily large integers $i$ and $j$ such that the numbers $N = k^i l^j$ are of this form. (Pick $i \geq t+1$ such that $(k^{i})_l = v0y'$ and $j \geq t+1$ such that $(l^{j})_k = u0x'$ for some $x' \in \Sigma_k^*,\ y' \in \Sigma_l^*$.) There exists a constant $\e > 0$, independent of the choice of $N$, such that for any $N$ as above and for any $0 \leq m < \e N$ we have $N + m k^{t+1} l^{t+1} \in A^{(t)}$ (we may take $\e = 1/k^{\abs{u}+t+2}l^{\abs{v}+t+2}$). Counting the number of choices of $m$, we conclude that $$\bar{d}(A^{(t)}) \geq \e/(1+\e k^{t+1}l^{t+1}) > 0.$$
	
Since the upper density is subadditive, for each $t \geq 0$ there exist $s \in C$ and $r \in D$ such that $\bar{d}(A^{(t)}_{s,r}) > 0$. Since $A^{(t+1)}_{s,r} \subset A^{(t)}_{s,r}$, we may find $s \in C$ and $r \in D$ such that $\bar{d}(A^{(t)}_{s,r}) > 0$ for all $t \geq 0$.

Fix such a choice of $s$ and $r$, and let the sequences $\tilde a(m) = a_{\cA,s}(m) = \tau( \delta(s, (m)_k))$ and $\tilde b(m) = a_{\cB,r} = \tau'( \delta'( r, (m)_l))$ be defined as above. Put $M(t) = \min(k^t,l^t)$. By construction of the sets $A^{(t)}_{s,r}$, for $n \in  A^{(t)}_{s,r}$ and $0 \leq m < M(t)$ we have
\begin{equation*}
	a(n + m) = \tilde a(m), \qquad b(n + m) = \tilde b(m). \label{eq:201}
\end{equation*}

This proves that $A^{(t)}_{s,r} \subset Z_{M(t)}$, and hence $Z_{M(t)}$ has positive density. Since the sequence of sets $Z_m$ is descending, letting $t \to \infty$ concludes the proof of  the first part of the claim. 

In order to prove  the remaining part of the statement, suppose that $\tilde a(m) \neq \tilde b(m)$ for some $m\geq 0$. Fix an integer $t \geq 0$ with $m < M(t)$ and note that for $n\in A^{(t)}_{s,r}$ we have $$a(n+m)=\tilde{a}(m)\neq\tilde{b}(m)=b(n+m).$$ Since $A^{(t)}_{s,r}$ has positive density, we reach a contradiction with the assumption that $a$ and $b$ are equal almost everywhere.
\end{proof}

We are now ready to prove our main result.

\begin{proof}[Proof of the Main Theorem] Let $k, l\geq 2$ be multiplicatively independent integers. Let $a$ be a $k$-automatic sequence and let $b$ be an $l$-automatic sequence that coincide almost everywhere.

 By Lemma \ref{lemma:idempotentautomaton}, after replacing $k$ and $l$ by their appropriate powers, there exist automata $\cA$ and $\cB$ that are idempotent, ignore the leading $0$'s, and produce the sequences $a$ and $b$, respectively. Applying Lemma \ref{lem:positive density} to $\cA$ and $\cB$, we obtain the sequence $\tilde{a}=\tilde{b}$ (see Lemma \ref{lem:positive density} for notation). Since the sequence $\tilde{a}=\tilde{b}$ is both $k$- and $l$-automatic, the classical version of Cobham's theorem shows that it is ultimately periodic.
 
 Let $c$ be a periodic sequence such that $\tilde{a}=\tilde{b}$ and $c$ ultimately coincide and let $q$ be the period of $c$.
	Ignoring the first finitely many terms where $\tilde{a}(m)\neq c(m)$ and splitting the set $Z_m$ into finitely many pieces, we conclude that the set
	 $$Z_m^{(i)}=\{n\in \N_0\mid a_{[n,n+m)}=c_{[0,m)} \text{ and } n \equiv i \bmod q\}$$
	 is infinite for some $0\leq i \leq q-1$ and all $m\in \N_0$. Thus, after possibly replacing the periodic sequence $c$ by its shift $\tilde{c}$, we see that $a$ and $\tilde{c}$ have arbitrarily long common factors. Let $a'$ and $b'$ be sequences given by $a'(n)=\ifbra{a(n)=\tilde{c}(n)}$ and $b'(n)=\ifbra{b(n)=\tilde{c}(n)}$, where we are using the Iverson bracket notation.
	 
	By construction, the sequences $a'$ and $b'$ take only the values $0$ and $1$,  $a'$ is $k$-automatic and $b'$ is $l$-automatic. It is also clear that $a' $ and $b'$ coincide almost everywhere. Moreover, since $Z_m^{(i)}$ is infinite, the constant word $1^{\omega}$ belongs to the subshift $X_{a'}$ generated by $a'$. Applying  Lemma  \ref{lemma:idempotentautomaton} again, we obtain automata $\cA'$ and $\cB'$ that are idempotent, ignore the leading $0$'s and produce the sequences $a'$ and $b'$, respectively.
	
	By Lemma \ref{lem:thick->component}, $\cA'$ has a strongly connected component where the output function takes the constant value $1$. Applying Lemma \ref{lem:positive density} to all strongly connected components of $\cA'$ and $\cB'$, we conclude that in both of these automata, the output function takes the value $1$ on every state in any strongly connected component. Thus, by Lemma \ref{lemma:densityconncomponent} the sequences $a'$ and $b'$ are equal to $1$ almost everywhere, and hence $a=\tilde{c}=b$ almost everywhere.
\end{proof} 
\section{An application}

In \cite{DeshouillersRuzsa-2011, Deshouillers-2012, Deshouillers-2016}, Deshouillers and Ruzsa studied the sequence $t_{12}(n!)$ of least nonzero digits of $n!$ in base $12$. More generally, one can consider the sequence $t_k(n!)$ of least nonzero digits of $n!$ in base $k$ for any $k\geq 2$ (see \cite{Kakutani-1967, Dekking-1980, Dresden-2008}).

	If $k$ is a prime power, then Deshouillers \cite{Deshouillers-2012} notes that $t_k(n!)$ is $k$-automatic. More generally, if $k$ is written in the form
$$
	k = p_1^{\alpha_1} p_2^{\alpha_2}  p_3^{\alpha_3}  \dots,
$$
where $p_i$ are distinct primes arranged so that $$ \alpha_1(p_1 -1) \geq\alpha_2 (p_2 -1) \geq\alpha_3 (p_3 -1) \geq \dots ,$$
then $t_k(n!)$ is $p_1$-automatic under the weaker assumption that the first inequality above is strict: $\alpha_1 (p_1 -1) > \alpha_2 (p_2 -1)$. In particular, the sequence $t_{10}(n!)$ is $5$-automatic. (This last conclusion was first proved in \cite{Kakutani-1967}.)

The base $k = 12$ is the first instance when the strict inequality  $\alpha_1 (p_1 -1) > \alpha_2 (p_2 -1)$ does not hold. Hence, it is natural to ask if $t_{12}(n!)$ is automatic, and more precisely if the sequences  $\ifbra{t_{12}(n!) = y}$ are automatic for $0 \leq y < 12$. Partial progress towards this goal has been made by Deshouillers and Ruzsa. In \cite{DeshouillersRuzsa-2011}, it is shown that $t_{12}(n!)$ coincides almost everywhere with a $3$-automatic sequence which takes only the values $4$ and $8$. On the other hand, as shown in \cite{Deshouillers-2012}, $t_{12}(n!)$ takes each of the values $3,6,9$ infinitely often. Moreover, the same author \cite{Deshouillers-2016} proved that the sequence $\ifbra{t_{12}(n!) = y}$ is not automatic for $y = 3,6,9$, and is not $3$-automatic for $y = 4,8$. It is natural to ask whether for $y = 4,8$, the sequence $\ifbra{t_{12}(n!) = y}$ might be automatic in a different base. Using the density version of Cobham's theorem, we are able to answer this question.

\begin{corollary}
	For any $y \in\{ 3, 4, 6, 8, 9\}$, the sequence $\ifbra{t_{12}(n!) = y}$ is not automatic. 
\end{corollary}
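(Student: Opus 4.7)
The cases $y \in \{3,6,9\}$ are already subsumed by Deshouillers' $2016$ result recalled in the paragraph above, so the substance of the corollary lies in $y \in \{4,8\}$. There Deshouillers only rules out $3$-automaticity, and the goal is to upgrade this to non-$k$-automaticity for every $k \geq 2$. My plan is to combine Deshouillers' result with the Main Theorem of this paper.

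Fix $y \in \{4,8\}$ and suppose for contradiction that $a_y(n) := \ifbra{t_{12}(n!)=y}$ is $k$-automatic for some $k \geq 2$. Since $a_y$ is not $3$-automatic, $k$ cannot be a power of $3$, so $k$ and $3$ are multiplicatively independent. The Deshouillers--Ruzsa $2011$ approximation provides a $3$-automatic sequence $b$ with values in $\{4,8\}$ that agrees with $t_{12}(n!)$ almost everywhere; setting $b_y(n) := \ifbra{b(n)=y}$ then gives a $3$-automatic sequence coinciding almost everywhere with $a_y$. The Main Theorem applies and yields that $a_y$ is periodic almost everywhere.

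It remains to contradict this periodicity. Since $a_y$ coincides on a density-one set with a periodic sequence of period $q$, the natural density $d_y := \lim_{N\to\infty} \frac{1}{N}\, |\{n<N : t_{12}(n!)=y\}|$ exists and equals a rational number of the form $r/q$. The step I expect to be the main obstacle is therefore producing the irrationality of $d_y$. I would first hope to read this off directly from the Deshouillers--Ruzsa $2011$ analysis, where $d_y$ coincides with the density of the $3$-automatic sequence $b_y$ and is expressible using transcendental quantities such as $\log 2/\log 3$; failing that, I would compute the density of $b_y$ from the incidence matrix of a primitive substitution generating $b_y$ (reading off the relevant component of its Perron eigenvector) and verify irrationality by a direct calculation. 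Once this irrationality is in hand, comparing it with the rational value $r/q$ closes the argument.
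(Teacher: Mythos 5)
Your reduction is the same as the paper's: dispose of $y\in\{3,6,9\}$ via Deshouillers (2016), use his non-$3$-automaticity for $y\in\{4,8\}$ to get multiplicative independence of $k$ and $3$, and feed the Deshouillers--Ruzsa $3$-automatic approximant into the Main Theorem. The gap is in your final step. After the Main Theorem you must contradict the conclusion that $a_y$ (equivalently $b_y$) is periodic almost everywhere, and your plan is to do this by showing the density $d_y$ is irrational. But it is not: by the explicit description of the Deshouillers--Ruzsa sequence (see \cite[Proposition 1]{Deshouillers-2016}, as quoted in the paper), $b(n)=4$ exactly when the base-$9$ digits of $n$ lying in $\{2,3,4,6,7\}$ occur an even number of times, a generalized Thue--Morse sequence. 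The number of $n<9^m$ with even count is $\tfrac{1}{2}\bigl(9^m+(-1)^m\bigr)$, so $d_4=d_8=\tfrac12$, a rational number; no quantity like $\log 2/\log 3$ appears, and the Perron-eigenvector computation you propose as a fallback would likewise return $\tfrac12$. A density of $\tfrac12$ is perfectly consistent with almost-everywhere periodicity (compare the sequence $4,8,4,8,\dots$), so density considerations alone cannot close the argument.

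What is actually needed is the statement that this parity-of-digit-count sequence is not almost everywhere periodic, which is the fact the paper invokes at the corresponding point (and then, in its ordering, concludes that $k$ must be a power of $3$, contradicting non-$3$-automaticity). This requires an argument finer than counting densities: for instance, if $b$ agreed almost everywhere with a $q$-periodic sequence, compare $n$ and $n+q\cdot 9^m$ for large $m$ --- these lie in the same class mod $q$, yet one can arrange, on a set of positive density, that the added high-order digits flip the parity, a contradiction; alternatively one can appeal to equidistribution results for such digital functions along arithmetic progressions. With that substituted for the irrationality step, your proof goes through and is essentially the paper's.
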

\begin{proof}
	The cases where $y \in \{3, 6 ,9\}$ have already  been treated in \cite[Theorem 1]{Deshouillers-2016}. Thus, let $y \in \{4,8\}$, and suppose for the sake of contradiction that the sequence $a(n)$ is $k$-automatic for some $k \geq 2$. By \cite{DeshouillersRuzsa-2011}, there exists a $3$-automatic sequence $b(n)$ such that $a(n) = b(n)$ almost everywhere. Moreover, it follows from the construction in \cite{DeshouillersRuzsa-2011} (see also \cite[Proposition 1]{Deshouillers-2016}) that the value of $b(n)$ depends only on the parity of the number of digits of $(n)_9$ in $\{2,3,4,6,7\}$; $b(n) = 4$ if the digits $2,3,4,6,7$ appear among the base-$9$ digits of $n$ in total an even number of times, and $b(n) = 8$ otherwise. In particular, $b(n)$ is not almost everywhere periodic. Thus the Main Theorem implies that $k$ is a power of $3$, and hence $a(n)$ is $3$-automatic. However, this possibility has been already ruled out by Deshouillers \cite[Theorem 1]{Deshouillers-2016}.
\end{proof}
 
\appendix

\section{Uniformly recurrent sequences}

In the case when the sequences $a$ and $b$ are uniformly recurrent, the proof of the Main Theorem is particularly simple, and arguably more elegant. Indeed, it is a consequence of a different variant of Cobham's theorem, due to Fagnot.

\begin{theorem}[Fagnot]\label{thm:Fagnot}
	Let $k, l\geq 2$ be multiplicatively independent integers. Let $a$ be a $k$-automatic sequence and let $b$ be an $l$-automatic sequence. Suppose that $\cL(a) = \cL(b)$. Then $a$ and $b$ are ultimately periodic.	
\end{theorem}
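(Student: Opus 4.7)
The plan is to mirror the proof of the Main Theorem, replacing the positive-density synchronization argument of Lemma \ref{lem:positive density} by the Allouche--Rampersad--Shallit (ARS) theorem on lexicographically minimal elements of automatic subshifts. First I would observe that $\cL(a) = \cL(b)$ is equivalent to the equality of orbit closures $X_a = X_b$, because $X_a$ consists precisely of those words whose language is contained in $\cL(a)$. Denote the common subshift by $X$.

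Next, I would apply ARS to $a$ and to $b$ to conclude that the lexicographically smallest element $c \in X$ is simultaneously $k$- and $l$-automatic. Classical Cobham then forces $c$ to be ultimately periodic, and a suitable shift makes $c = v^\omega$ purely periodic. I would then reduce via Lemma \ref{lemma:idempotentautomaton} to the case where $a$ and $b$ are produced by idempotent automata $\cA, \cB$ ignoring leading zeros, so that the automata-theoretic machinery of Section 2 becomes available.

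The principal obstacle is to pass from ``some element of $X_a$ is ultimately periodic'' to ``$a$ itself is ultimately periodic''. This step cannot be handled by the $k$-automaticity of $a$ alone: the $2$-automatic sequence $a(n) = \ifbra{n = 2^k}$ has $0^\omega \in X_a$ yet is not ultimately periodic. The resolution must genuinely use the fact that the same subshift constraint is visible from two multiplicatively independent bases. My plan is to argue by induction on the total state count $|\cA| + |\cB|$: identify the strongly connected components whose output sequences agree with a shift of $v^\omega$ (a periodic analog of the constant-output criterion of Lemma \ref{lem:thick->component}), collapse each such component to a single state recording the periodic output, and thereby obtain reduced $k$- and $l$-automatic sequences whose languages still coincide but whose automata have strictly fewer states. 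The inductive hypothesis then applies, while the base case, in which no non-periodic strongly connected component remains, forces ultimate periodicity via Lemma \ref{lemma:densityconncomponent}. The hard part will be verifying that the collapsing step preserves the language-equality hypothesis $\cL(a) = \cL(b)$ and that the reduction strictly decreases the complexity of the pair; doing so probably requires an auxiliary use of Lemma \ref{lem:positive density} to match up the ``periodic parts'' of $\cA$ and $\cB$ in a compatible way.
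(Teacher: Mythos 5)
You should first note that the paper itself does not prove Theorem \ref{thm:Fagnot}: its ``proof'' is a citation of Fagnot's Th\'eor\`eme 15, and the authors only establish the weaker Proposition \ref{prop:weak-Fagnot}, where uniform recurrence is assumed, by exactly the route you describe (equivalence of $\cL(a)=\cL(b)$ with $X_a=X_b$, Theorem \ref{thm:min-is-auto} applied to $a$ and $b$ to see that $\min(a)=\min(b)$ is $k$- and $l$-automatic, then classical Cobham). That part of your proposal is fine, but it only yields the uniformly recurrent case: there one has $\cL(a)=\cL(\min(a))$, so the factor complexity of $a$ is bounded by that of an ultimately periodic word and Morse--Hedlund applies. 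Without uniform recurrence this identity fails, and, as your own example of the characteristic sequence of the powers of $2$ shows, knowing that the common subshift contains an (ultimately) periodic point says nothing about $a$ itself. So the step you defer is not a technical verification; it is the entire content of Fagnot's theorem beyond the uniformly recurrent case.

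The proposed resolution --- induction on $\abs{\cA}+\abs{\cB}$ with collapsing of strongly connected components whose outputs follow a shift of $v^\omega$ --- is not carried out, and as sketched it has several unaddressed problems. First, the ``periodic analog'' of Lemma \ref{lem:thick->component} requires outputs depending on $n \bmod q$, so a component cannot be collapsed to a single state while remaining an automaton with output of the same kind; one would at least need to track residues, and then the state count need not strictly decrease. Second, collapsing modifies the sequences on an infinite (typically positive upper density) set of indices, and you give no argument that the modified pair still satisfies $\cL(a)=\cL(b)$; Lemma \ref{lem:positive density} is a statement about positive-density sets of positions where $a$ and $b$ copy $\tilde a$ and $\tilde b$, and it gives no control over languages, nor does it force the components selected in $\cA$ and in $\cB$ to induce \emph{matching} modifications of the two sequences. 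Third, the base case of your induction (``no non-periodic strongly connected component remains'') combined with Lemma \ref{lemma:densityconncomponent} would only give information almost everywhere, which is weaker than the ultimate periodicity claimed. As it stands, your argument proves Proposition \ref{prop:weak-Fagnot} (which the paper proves the same way) but not Theorem \ref{thm:Fagnot}; for the full statement the paper relies on Fagnot's original proof, and any self-contained argument would have to supply the missing passage from a periodic point of $X_a=X_b$ to ultimate periodicity of $a$ and $b$ themselves.
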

\begin{proof}
\cite[Th\'eor\`eme 15]{Fagnot-1997}.
\end{proof}

\begin{proposition}\label{prop:main-SC}  Let $k, l\geq 2$ be multiplicatively independent integers. Let $a$ be a $k$-automatic sequence and let $b$ be an $l$-automatic sequence. Suppose that $a$ and $b$ coincide almost everywhere and are both uniformly recurrent. Then $a$ and $b$ are equal and periodic.
\end{proposition}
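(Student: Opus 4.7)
The strategy is to reduce the proposition to Fagnot's theorem (Theorem~\ref{thm:Fagnot}) by showing that the languages $\cL(a)$ and $\cL(b)$ coincide, then to upgrade the resulting ``ultimately periodic'' conclusion to genuine periodicity using uniform recurrence, and finally to conclude that two periodic sequences agreeing almost everywhere must coincide.

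First, I would show $\cL(a) \subset \cL(b)$ (the reverse inclusion being symmetric). Let $w$ be a factor of $a$ of length $m$. By uniform recurrence, there is a constant $C = C(w)$ such that every length-$C$ factor of $a$ contains $w$, and hence the set $E_w = \{n \in \NN_0 \mid a_{[n,n+m)} = w\}$ has lower density at least $1/C > 0$. On the other hand, the hypothesis that $a$ and $b$ agree almost everywhere means that $F = \{n \mid a(n) \neq b(n)\}$ has upper density zero, and therefore so does the thickened bad set $\bigcup_{i=0}^{m-1}(F - i)$. Choosing any $n \in E_w$ outside this bad set yields $b_{[n,n+m)} = a_{[n,n+m)} = w$, so $w \in \cL(b)$. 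By Theorem~\ref{thm:Fagnot}, both $a$ and $b$ are then ultimately periodic.

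Next I would promote ultimate periodicity to genuine periodicity. Suppose $a$ satisfies $a(n+q) = a(n)$ for all $n \geq N$. If $a$ were not already $q$-periodic from the start, there would exist $n_0 < N$ with $a(n_0) \neq a(n_0+q)$. Uniform recurrence provides an occurrence of the factor $a_{[0,N+q)}$ at some position $n' \geq N$, and then $a(n_0) = a(n' + n_0) = a(n' + n_0 + q) = a(n_0 + q)$ (using $n' + n_0 \geq N$), a contradiction. Thus $a$ is purely periodic, and the same argument applies to $b$.

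Finally, two purely periodic sequences that agree on a set of density one must be equal: working modulo $\operatorname{lcm}$ of their periods, any single disagreement propagates to a full residue class and hence to a set of positive density, contradicting the hypothesis. This yields $a = b$, and the common sequence is periodic.

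The only substantive input is Fagnot's theorem; everything else is elementary density bookkeeping. The conceptually crucial step is the first one, where uniform recurrence converts the qualitative statement ``$w$ is a factor of $a$'' into the quantitative statement ``$w$ occurs with positive lower density,'' which is exactly the bridge needed to exploit the almost-everywhere agreement hypothesis.
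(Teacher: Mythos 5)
Your proof is correct and follows essentially the same route as the paper: establish $\cL(a)=\cL(b)$ by combining the positive frequency of factors (from uniform recurrence) with the density-one agreement, invoke Fagnot's theorem, and then use uniform recurrence to upgrade ultimate periodicity to periodicity and the fact that almost-everywhere-equal periodic sequences coincide. The only difference is that you spell out the elementary density bookkeeping that the paper leaves implicit.
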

\begin{proof}
	Note that $\cL(a) = \cL(b)$; indeed, it follows from uniform recurrence that each factor appears in $a$ with positive frequency, and it follows from the fact that $a$ and $b$ coincide almost everywhere that each factor that appears in $a$ with positive frequency is also a factor of $b$. Hence, $\cL(a) \subset \cL(b)$. By symmetry, we obtain the opposite inclusion.

	It follows from Theorem \ref{thm:Fagnot} that $a$ and $b$ are ultimately periodic. Since a uniformly recurrent ultimately periodic sequence is periodic, and since two periodic sequences that agree almost everywhere are equal, we conclude that $a=b$, and that this common sequence is indeed periodic.
\end{proof}

Note that in the above proof we do not need the full strength of Theorem \ref{thm:Fagnot}. Indeed, the following weaker version suffices.

\begin{proposition}\label{prop:weak-Fagnot}
	Let $k, l\geq 2$ be multiplicatively independent integers. Let $a$ be a $k$-automatic sequence and let $b$ be an $l$-automatic sequence. Suppose that $\cL(a) = \cL(b)$, and $a$ and $b$ are uniformly recurrent. Then $a$ and $b$ are periodic.	
\end{proposition}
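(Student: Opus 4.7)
My plan is to deduce Proposition \ref{prop:weak-Fagnot} from the classical Cobham theorem together with the result of Allouche--Rampersad--Shallit that the lexicographically minimal element of the subshift generated by an automatic sequence is itself automatic (in the same base). Since the hypothesis $\cL(a) = \cL(b)$ together with uniform recurrence gives $X_a = X_b$, and since both $a$ and $b$ generate minimal subshifts, the lex-minimum of this common subshift will be forced to be automatic in both bases, hence ultimately periodic, hence (being uniformly recurrent) periodic, which will quickly propagate to $a$ and $b$.

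In detail, first I would fix a total order on the finite alphabet $\Omega$ containing the values of $a$ and $b$, and observe that a uniformly recurrent sequence generates a minimal subshift; therefore $X_a$ and $X_b$ are minimal. Since $X_a$ and $X_b$ are determined by the respective languages and $\cL(a)=\cL(b)$, we have $X_a = X_b$; call this common subshift $X$. Let $c \in X$ be the lexicographically minimal element (it exists because $X$ is compact and lex-order is lower semicontinuous in the product topology). Applying the Allouche--Rampersad--Shallit theorem to $a$, the sequence $c$ is $k$-automatic; applying it to $b$, the same sequence $c$ is $l$-automatic.

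By the classical Cobham theorem, $c$ is ultimately periodic. But $c$ lies in the minimal subshift $X$, so $c$ is uniformly recurrent, and a uniformly recurrent ultimately periodic sequence is in fact periodic. Let $q$ be a period of $c$. Then the shift-orbit $\{\sigma^n c : n \geq 0\}$ is a finite, and in particular closed, shift-invariant subset of $X$. By minimality of $X$, this orbit equals $X$. Since both $a$ and $b$ belong to $X$, each of them is a shift of $c$, hence periodic of period dividing $q$.

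The only non-routine input is the lexicographic-minimum theorem of Allouche--Rampersad--Shallit; everything else is a short topological or combinatorial observation. The step I would double-check most carefully is the passage from ``$X$ minimal and contains a periodic point'' to ``every element of $X$ is a shift of that periodic point'', but this is immediate once one notes that a finite shift-invariant set is automatically closed.
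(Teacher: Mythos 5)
Your proof is correct, and its core is the same as the paper's: identify the lexicographically least element $c$ of the common subshift with $\min(a)=\min(b)$, apply the Allouche--Rampersad--Shallit theorem in both bases, and invoke the classical Cobham theorem to make $c$ ultimately periodic. Where you diverge is the endgame. The paper stays combinatorial: from $\cL(c)=\cL(a)=\cL(b)$ it deduces that $a$ and $b$ have bounded factor complexity and concludes by the Morse--Hedlund-type criterion (\cite[Theorem 10.2.6]{AlloucheShallit-book}) that they are ultimately periodic, with periodicity then following from uniform recurrence. You instead argue dynamically: uniform recurrence makes $X_a=X_b=X$ minimal, $c$ is then uniformly recurrent and hence genuinely periodic, its finite forward orbit is a nonempty closed shift-invariant subset of $X$, so by minimality $X$ is exactly that finite periodic orbit, and $a,b\in X$ are shifts of $c$. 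Both routes are sound; yours has the small advantage of landing directly on ``periodic'' (and even shows $a$ and $b$ are shifts of the same periodic word), whereas the paper's written argument stops at ``ultimately periodic'' and tacitly uses uniform recurrence for the final upgrade, at the cost of your needing the standard facts that uniform recurrence is equivalent to minimality of the orbit closure and that a uniformly recurrent ultimately periodic word is periodic --- facts the paper also uses elsewhere, so nothing outside its toolkit.
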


We give an independent proof of Proposition \ref{prop:weak-Fagnot}. For this purpose, we need some preliminaries.
Assume that $\Omega$ is further equipped with a total order $<$ and let $a \in \Omega^{\omega}$ be an infinite word. Consider the element $\min(a) \in \Omega^{\omega}$   characterised by the property that for each $n \geq 0$ the $n$-th prefix $\min(a)_{[0,n)}$ of $\min(a)$ is the lexicographically least factor of $a$ of length $n$. It is easy to see that the word $\min(a)$ is the smallest element of the closed orbit $X_a$ of $a$ with respect to the lexicographic ordering on $\Omega^{\omega}$. We will use the following result.

\begin{theorem}[Allouche--Rampersad--Shallit]\label{thm:min-is-auto}
	Let $\Omega$ be a finite ordered set, and let $a \colon \NN_0 \to \Omega$ be a $k$-automatic sequence. Then $\min(a)$ is $k$-automatic.
\end{theorem}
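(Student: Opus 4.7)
My plan is to prove that $\min(a)$ is $k$-automatic via first-order definability in the structure $\cN = (\NN, +, V_k, a)$, where $V_k(n)$ denotes the largest power of $k$ dividing $n$ and $a$ is adjoined as an atomic predicate. The Bruy\`ere--Hansel--Michaux--Villemaire theorem says that a sequence is $k$-automatic if and only if its graph is first-order definable in $(\NN, +, V_k)$; since $a$ is itself $k$-automatic, adding it as an atomic predicate does not enlarge the class of first-order definable $k$-automatic sequences.

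The first step is a short coherence lemma: if $L_i \in \Omega^i$ denotes the lexicographically smallest length-$i$ factor of $a$, then $L_i$ is a prefix of $L_{i+1}$. Indeed, the language $\cL(a)$ is right-extendable, so there exists $c \in \Omega$ with $L_i c \in \cL(a)$; and any length-$(i+1)$ factor of $a$ whose length-$i$ prefix strictly exceeds $L_i$ in lex order is itself lex-larger than $L_i c$. Consequently $\min(a)$ is well defined by $\min(a)_{[0,i)} = L_i$, and $\min(a)(i) = L_{i+1}(i)$ for every $i$.

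The second step is to exhibit a first-order formula in $\cN$ defining the graph of $\min(a)$. I would set
\[
\mathrm{LL}(p, p', n) \;\equiv\; \forall j < n \colon \Bigl(\forall j' < j\colon a(p+j') = a(p'+j')\Bigr) \Rightarrow a(p+j) \leq a(p'+j),
\]
which captures $a_{[p, p+n)} \leq_{\text{lex}} a_{[p', p'+n)}$ with respect to the fixed total order on $\Omega$. Define $\mathrm{LMS}(p, n) \equiv \forall p'\colon \mathrm{LL}(p, p', n)$, expressing that $p$ starts a lex-minimum length-$n$ factor of $a$. The coherence lemma then yields
\[
\min(a)(i) = c \;\iff\; \exists n, p \colon n > i \,\wedge\, \mathrm{LMS}(p, n) \,\wedge\, a(p+i) = c,
\]
a first-order formula in $\cN$. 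Applying the Bruy\`ere--Hansel--Michaux--Villemaire equivalence to this formula concludes that $\min(a)$ is $k$-automatic.

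The main obstacle is really one of citation: one must invoke the equivalence between first-order definability in $(\NN, +, V_k)$ and $k$-automaticity as a black box. A concrete alternative would be to construct an automaton for $\min(a)$ directly, with states encoding equivalence classes of positions in $a$ that realize the current prefix of $\min(a)$, using the $O(n)$ bound on the factor complexity of $k$-automatic sequences to keep the state set finite; but the logical approach is cleaner and matches the proof strategy of Allouche--Rampersad--Shallit.
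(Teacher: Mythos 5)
Your proposal is correct, but note that the paper does not actually prove this statement at all: its ``proof'' is just the citation to Allouche--Rampersad--Shallit, so you have supplied an argument where the paper has none. Your argument checks out: the coherence lemma (each $L_i$ is a prefix of $L_{i+1}$, using right-extendability of $\cL(a)$) is exactly what makes $\min(a)$ well defined, and it is what the paper glosses as ``it is easy to see''; your formula $\mathrm{LL}$ does define $a_{[p,p+n)} \leq_{\text{lex}} a_{[p',p'+n)}$ (at the first disagreement the implication forces strict inequality, and afterwards the hypothesis fails, so it is vacuous); quantifying $p'$ over all of $\NN_0$ in $\mathrm{LMS}$ correctly ranges over all length-$n$ factors; and the final equivalence for $\min(a)(i)=c$ follows from the coherence lemma, since $\mathrm{LMS}(p,n)$ forces $a_{[p,p+n)}=L_n$ and $L_n(i)=L_{i+1}(i)$ for $n>i$ (one could even fix $n=i+1$ and drop the existential over $n$). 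Two small points you should make explicit if you write this up: the comparison $a(p+j)\leq a(p'+j)$ must be unwound as a finite disjunction over pairs $c\leq c'$ of the atomic predicates $a(p+j)=c \wedge a(p'+j)=c'$, and the elimination of the predicates ``$a(\cdot)=c$'' relies on the fact that each level set of a $k$-automatic sequence is $k$-recognizable, hence itself definable in $(\NN,+,V_k)$ --- this is the same Bruy\`ere--Hansel--Michaux--Villemaire equivalence you invoke, used in both directions. The cost of your route is that this equivalence is a substantial black box (comparable in weight to what is being proved), whereas a direct automaton construction, as sketched in your last paragraph, would be self-contained; the benefit is that the definability proof is short, transparent, and immediately yields effectivity.
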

\begin{proof} \cite[Theorem 6]{AlloucheRampersadShallit-2009}.\end{proof}

\begin{proof}[Proof of Proposition \ref{prop:weak-Fagnot}]
	By an argument similar to the one in the proof of Proposition \ref{prop:main-SC}, one can show that $\cL(a) = \cL( \min(a))$ and analogously $\cL(b) = \cL(\min(b))$. Indeed, the inclusion $\cL(\min(a)) \subset \cL(a)$ is obvious. For the opposite inclusion, note that by uniform recurrence for each $n \geq 0$ there exists $m \geq 0$ such that each factor of $a$ of length $\leq n$ appears in each factor of $a$ of length $\geq m$. 

	By construction,  $\min(a)$ is determined by $\cL(a)$ and $\min(b)$ is determined by $\cL(b)$. Thus, $\cL(a) = \cL(b)$ implies that $\min(a) = \min(b)$; denote this sequence by $c$. By Theorem \ref{thm:min-is-auto}, $c$ is both $k$- and $l$-automatic. Hence, by the classical version of Cobham's theorem, $c$ is ultimately periodic. It follows that there is an integer $C$ such that for any $n$ the number of factors of $c$ of  length $n$ is at most $C$. Since $\cL(c)=\cL(a)=\cL(b)$, the same holds for $a$ and $b$, and hence by a well-known result, $a$ and $b$ are both ultimately periodic (see, e.g., \cite[Theorem 10.2.6]{AlloucheShallit-book}). 
\end{proof}
 
\bibliographystyle{amsalpha}
\bibliography{bibliography}

\providecommand{\bysame}{\leavevmode\hbox to3em{\hrulefill}\thinspace}
\providecommand{\MR}{\relax\ifhmode\unskip\space\fi MR }
\providecommand{\MRhref}[2]{%
  \href{http://www.ams.org/mathscinet-getitem?mr=#1}{#2}
}
\providecommand{\href}[2]{#2}
\begin{thebibliography}{BHMV94}

\bibitem[AB08]{AdamczewskiBell-2008}
Boris Adamczewski and Jason Bell, \emph{Function fields in positive
  characteristic: expansions and {C}obham's theorem}, J. Algebra \textbf{319}
  (2008), no.~6, 2337--2350.

\bibitem[AB11]{AdamczewskiBell-2011}
\bysame, \emph{An analogue of {C}obham's theorem for fractals}, Trans. Amer.
  Math. Soc. \textbf{363} (2011), no.~8, 4421--4442.

\bibitem[AB13]{AdamczewskiBell-2013}
\bysame, \emph{A problem about {M}ahler functions}, Preprint.
  {\href{https://arxiv.org/abs/1303.2019}{arXiv:1303.2019 [math.NT]}}.

\bibitem[All15]{Allouche-2015}
Jean-Paul Allouche, \emph{Thue, combinatorics on words, and conjectures
  inspired by the {T}hue-{M}orse sequence}, J. Th\'eor. Nombres Bordeaux
  \textbf{27} (2015), no.~2, 375--388.

\bibitem[ARS09]{AlloucheRampersadShallit-2009}
Jean-Paul Allouche, Narad Rampersad, and Jeffrey Shallit, \emph{Periodicity,
  repetitions, and orbits of an automatic sequence}, Theoret. Comput. Sci.
  \textbf{410} (2009), no.~30-32, 2795--2803.

\bibitem[AS03]{AlloucheShallit-book}
Jean-Paul Allouche and Jeffrey Shallit, \emph{Automatic sequences}, Cambridge
  University Press, Cambridge, 2003.

\bibitem[BB09]{BoigelotBrusten-2009}
Bernard Boigelot and Julien Brusten, \emph{A generalization of {C}obham's
  theorem to automata over real numbers}, Theoret. Comput. Sci. \textbf{410}
  (2009), no.~18, 1694--1703.

\bibitem[BBB10]{BoigelotBrustenBruyere-2010}
Bernard Boigelot, Julien Brusten, and V\'eronique Bruy\`ere, \emph{On the sets
  of real numbers recognized by finite automata in multiple bases}, Log.
  Methods Comput. Sci. \textbf{6} (2010), no.~1, 1:6, 17. \MR{2594958}

\bibitem[BBL09]{BoigelotBrustenLeroux-2009}
Bernard Boigelot, Julien Brusten, and J\'er\^ome Leroux, \emph{A generalization
  of {S}emenov's theorem to automata over real numbers}, Automated
  deduction---{CADE}-22, Lecture Notes in Comput. Sci., vol. 5663, Springer,
  Berlin, 2009, pp.~469--484. \MR{2550354}

\bibitem[Bel07]{Bell-2005}
Jason~P. Bell, \emph{A generalization of {C}obham's theorem for regular
  sequences}, S\'em. Lothar. Combin. \textbf{54A} (2005/07), Art. B54Ap. 15 pp.

\bibitem[B{\`{e}}s97]{Bes-1997}
Alexis B{\`{e}}s, \emph{Undecidable extensions of {B}\"uchi arithmetic and
  {C}obham-{S}em\"enov theorem}, J. Symbolic Logic \textbf{62} (1997), no.~4,
  1280--1296.

\bibitem[B{\`{e}}s00]{Bes-2000}
\bysame, \emph{An extension of the {C}obham-{S}em\"enov theorem}, J. Symbolic
  Logic \textbf{65} (2000), no.~1, 201--211.

\bibitem[BFK17]{BosmaFokkinkKrebs-2017}
Wieb Bosma, Robbert Fokkink, and Thijmen Krebs, \emph{{On automatic subsets of
  the Gaussian integers}}, Indagationes Mathematicae \textbf{28} (2017), no.~1,
  32--37.

\bibitem[BHMV94]{BruyereHanselMichauxVillemaire-1994}
V\'eronique Bruy\`ere, Georges Hansel, Christian Michaux, and Roger Villemaire,
  \emph{Logic and {$p$}-recognizable sets of integers}, Bull. Belg. Math. Soc.
  Simon Stevin \textbf{1} (1994), no.~2, 191--238, Journ\'ees Montoises (Mons,
  1992). \MR{1318968}

\bibitem[CH14]{ChanHare-2014}
Davy Ho-Yuen Chan and Kevin~G. Hare, \emph{A multi-dimensional analogue of
  {C}obham's theorem for fractals}, Proc. Amer. Math. Soc. \textbf{142} (2014),
  no.~2, 449--456.

\bibitem[CLR15]{CharlierLeroyRigo-2015}
\'Emilie Charlier, Julien Leroy, and Michel Rigo, \emph{An analogue of
  {C}obham's theorem for graph directed iterated function systems}, Adv. Math.
  \textbf{280} (2015), 86--120.

\bibitem[Cob69]{Cobham-1969}
Alan Cobham, \emph{On the base-dependence of sets of numbers recognizable by
  finite automata}, Math. Systems Theory \textbf{3} (1969), 186--192.

\bibitem[Dek80]{Dekking-1980}
F.~Michel Dekking, \emph{Regularity and irregularity of sequences generated by
  automata}, Seminar on {N}umber {T}heory, 1979--1980 ({F}rench), Univ.
  Bordeaux I, Talence, 1980, pp.~Exp. No. 9, 10 pp.

\bibitem[Des12]{Deshouillers-2012}
Jean-Marc Deshouillers, \emph{A footnote to {\it {t}he least non zero digit of
  {$n$}! in base {$12$}}}, Unif. Distrib. Theory \textbf{7} (2012), no.~1,
  71--73.

\bibitem[Des16]{Deshouillers-2016}
\bysame, \emph{Yet another footnote to {\it {t}he least non zero digit of
  {$n!$} in base} 12}, Unif. Distrib. Theory \textbf{11} (2016), no.~2,
  163--167.

\bibitem[DR09]{DurandRigo-2009}
Fabien Durand and Michel Rigo, \emph{Syndeticity and independent
  substitutions}, Adv. in Appl. Math. \textbf{42} (2009), no.~1, 1--22.

\bibitem[DR11a]{DeshouillersRuzsa-2011}
Jean-Marc Deshouillers and Imre~Z. Ruzsa, \emph{The least nonzero digit of
  {$n!$} in base 12}, Publ. Math. Debrecen \textbf{79} (2011), no.~3-4,
  395--400.

\bibitem[DR11b]{DurandRigo-2011}
Fabien Durand and Michel Rigo, \emph{{On Cobham's theorem}}, 2011,
  https://hal.archives-ouvertes.fr/hal-00605375.

\bibitem[Dre08]{Dresden-2008}
Gregory~P. Dresden, \emph{Three transcendental numbers from the last non-zero
  digits of {$n^n,\ F_n$}, and {$n!$}}, Math. Mag. \textbf{81} (2008), no.~2,
  96--105.

\bibitem[Dur98a]{Durand-1998}
Fabien Durand, \emph{A generalization of {C}obham's theorem}, Theory Comput.
  Syst. \textbf{31} (1998), no.~2, 169--185.

\bibitem[Dur98b]{Durand-1998b}
\bysame, \emph{Sur les ensembles d'entiers reconnaissables}, J. Th\'eor.
  Nombres Bordeaux \textbf{10} (1998), no.~1, 65--84.

\bibitem[Dur02a]{Durand-2002}
\bysame, \emph{Combinatorial and dynamical study of substitutions around the
  theorem of {C}obham}, Dynamics and randomness ({S}antiago, 2000), Nonlinear
  Phenom. Complex Systems, vol.~7, Kluwer Acad. Publ., Dordrecht, 2002,
  pp.~53--94.

\bibitem[Dur02b]{Durand-2002-AA}
\bysame, \emph{A theorem of {C}obham for non-primitive substitutions}, Acta
  Arith. \textbf{104} (2002), no.~3, 225--241.

\bibitem[Dur08]{Durand-2008}
\bysame, \emph{Cobham-{S}emenov theorem and {$\Bbb N^d$}-subshifts}, Theoret.
  Comput. Sci. \textbf{391} (2008), no.~1-2, 20--38.

\bibitem[Dur11]{Durand-2011}
\bysame, \emph{Cobham's theorem for substitutions}, J. Eur. Math. Soc. (JEMS)
  \textbf{13} (2011), no.~6, 1799--1814.

\bibitem[Fab94]{Fabre-1994}
St\'ephane Fabre, \emph{Une g\'en\'eralisation du th\'eor\`eme de {C}obham},
  Acta Arith. \textbf{67} (1994), no.~3, 197--208.

\bibitem[Fag97]{Fagnot-1997}
Isabelle Fagnot, \emph{Sur les facteurs des mots automatiques}, Theoret.
  Comput. Sci. \textbf{172} (1997), no.~1-2, 67--89.

\bibitem[Han82]{Hansel-1982}
Georges Hansel, \emph{\'{A} propos d'un th\'eor\`eme de {C}obham}, Actes de la
  f\^{e}te des mots (Dominique Perrin, ed.), Greco de programmation, CNRS,
  Rouen, 1982, pp.~50--59.

\bibitem[Han98]{Hansel-1998}
\bysame, \emph{Syst\`emes de num\'eration ind\'ependants et synd\'eticit\'e},
  Theoret. Comput. Sci. \textbf{204} (1998), no.~1-2, 119--130.

\bibitem[HS03]{HanselSafer-2003}
Georges Hansel and Taoufik Safer, \emph{Vers un th\'eor\`eme de {C}obham pour
  les entiers de {G}auss}, Bull. Belg. Math. Soc. Simon Stevin \textbf{10}
  (2003), 723--735.

\bibitem[Kak67]{Kakutani-1967}
Shizuo Kakutani, \emph{Ergodic theory of shift transformations}, Proc. {F}ifth
  {B}erkeley {S}ympos. {M}ath. {S}tatist. and {P}robability ({B}erkeley,
  {C}alif., 1965/66), {V}ol. {II}: {C}ontributions to {P}robability {T}heory,
  {P}art 2, Univ. California Press, Berkeley, Calif., 1967, pp.~405--414.

\bibitem[K{\"{a}}r05]{Karki-2005}
Tomi K{\"{a}}rki, \emph{A note on the proof of {C}obham's theorem}, TUCS
  Technical report. No. 713, September 2005.

\bibitem[MV93]{MichauxVillemaire-1993}
Christian Michaux and Roger Villemaire, \emph{Cobham's theorem seen through
  {B}\"uchi's theorem}, Automata, languages and programming ({L}und, 1993),
  Lecture Notes in Comput. Sci., vol. 700, Springer, Berlin, 1993,
  pp.~325--334.

\bibitem[MV96]{MichauxVillemaire-1996}
\bysame, \emph{Presburger arithmetic and recognizability of sets of natural
  numbers by automata: new proofs of {C}obham's and {S}emenov's theorems}, Ann.
  Pure Appl. Logic \textbf{77} (1996), no.~3, 251--277.

\bibitem[PB97]{PointBruyere-1997}
Fran{\cedilla{c}}oise Point and V\'eronique Bruy\`ere, \emph{On the
  {C}obham-{S}emenov theorem}, Theory Comput. Syst. \textbf{30} (1997), no.~2,
  197--220.

\bibitem[Per90]{Perrin-1990}
Dominique Perrin, \emph{Finite automata}, Handbook of theoretical computer
  science. {V}ol. {B} (Jan van Leeuwen, ed.), Elsevier Science Publishers,
  B.V., Amsterdam; MIT Press, Cambridge, MA, 1990, Formal models and semantics,
  pp.~2--57.

\bibitem[Reu84]{Reutenauer-1984}
Cristophe Reutenauer, \emph{D\'emonstration du th\'eor\`eme de {C}obham sur les
  ensembles de nombre reconnaissables, d'apr\`es {H}ansel}, S\'eminaire
  d'Informatique Th\'eorique, Ann\'ee 1983-1984, S\'eance du 27 mars 1984.,
  Institut de Programmation, Paris, 1984.

\bibitem[RW06]{Rigo-2006}
Michel Rigo and Laurent Waxweiler, \emph{A note on syndeticity, recognizable
  sets and {C}obham's theorem}, Bull. Eur. Assoc. Theor. Comput. Sci. EATCS
  (2006), no.~88, 169--173.

\bibitem[Sem77]{Semenov-1977}
Alexei~L'vovich Semenov, \emph{The {P}resburger nature of predicates that are
  regular in two number systems}, Sibirsk. Mat. \v Z. \textbf{18} (1977),
  no.~2, 403--418, 479.

\bibitem[Sha17]{Shallit-2017}
Jeffrey Shallit, \emph{Notes on {C}obham's theorem}, Notes for CS 860,
  University of Waterloo, March 2017.

\bibitem[SS17]{SchafkeSinger-2017}
Reinhard Sch{\"{a}}fke and Michael~F. Singer, \emph{{M}ahler equations and
  rationality}, Preprint.
  {\href{https://arxiv.org/abs/1605.08830}{arXiv:1605.08830 [math.CA]}}.

\end{thebibliography}

\end{document}